\providecommand{\bysame}{\leavevmode\hbox to3em{\hrulefill}\thinspace}
\providecommand{\MR}{\relax\ifhmode\unskip\space\fi MR }
\providecommand{\href}[2]{#2}
\numberwithin{equation}{section}
\theoremstyle{plain}
\newtheorem{thm}{Theorem}[section]
\newtheorem{prop}[thm]{Proposition}
\newtheorem{lem}[thm]{Lemma}
\newtheorem*{thm*}{Theorem}
\newtheorem*{main}{Main Theorem}
\theoremstyle{definition}
\newtheorem{defn}[thm]{Definition}
\theoremstyle{remark}
\newtheorem*{rem}{Remark}
\newcommand{\mbb}[1]{\mathbb{#1}}
\newcommand{\ol}[1]{\overline{#1}}
\newcommand{\lie}[1]{{\mathfrak{#1}}}
\newcommand{\abs}[1]{\lvert #1\rvert}
\newcommand{\norm}[1]{\lVert #1\rVert}
\title{Pseudoconvex non-Stein domains in primary Hopf surfaces}
\author{Christian Miebach}
\address{Laboratoire de Math\'ematiques Pures et Appliqu\'ees, CNRS-FR~2956,
Universit\'e du Littoral C\^ote d'Opale, 50, rue F.~Buisson, 62228 Calais Cedex,
France}
\email{miebach@lmpa.univ-littoral.fr}
\subjclass[2010]{32M05 (primary); 32E40 (secondary)}
\begin{document}

\begin{abstract}
We describe pseudoconvex non-Stein domains in primary Hopf surfaces using
techniques developed by Hirschowitz.
\end{abstract}

\maketitle

\section{Introduction}

Let $H$ be a primary Hopf surface. In~\cite{LY} Levenberg and Yamaguchi
characterize locally pseudoconvex domains $D\subset H$ ha\-ving smooth
real-analytic boundary that are not Stein. In this note we generalize their
result to arbitrary pseudoconvex domains using ideas developed by Hirschowitz
in~\cite{Hir1} and~\cite{Hir}. For the rea\-ders' convenience these ideas are
reviewed in a slightly generalized form in Section~2. In Section~3 we review
the structure of primary Hopf surfaces in order to describe a certain (singular)
holomorphic foliation $\mathcal{F}$ of $H$. This allows us to formulate the
following Main Theorem, which is proven in Sections~4 and~5.

\begin{main}
Let $D\subset H$ be a pseudoconvex domain. If $D$ is not Stein, then $D$ contains
with every point $p\in D$ the topological closure $\ol{F}_p$ of the leaf
$F\in\mathcal{F}$ passing through $p$.
\end{main}

I would like to thank Karl Oeljeklaus for helpful discussions on the subject of
this paper and Stefan Nemirovski for a suggestion on how to prove
Lemma~\ref{Lem:Stein}. I am also grateful to Peter Heinzner and the SFB/TR~12 for
an invitation to the Ruhr-Universit\"at Bochum where a part of this paper has
been written.

\section{A review of Hirschowitz' methods}

In this section we present the methods developed by Hirschowitz in~\cite{Hir}
in a slightly more general setup.

Let $X$ be a complex manifold with holomorphic tangent bundle $TX\to X$, and let
$\pi\colon\mbb{P}TX\to X$ be the projectivized holomorphic tangent bundle. A
continuous function on $X$ is called \emph{strictly} pluri\-subharmonic on $X$
if it is everywhere locally the sum of a continuous plurisubharmonic and a
smooth strictly plurisubharmonic function.

\begin{defn}
Let $\varphi\in\mathcal{C}(X)$ be plurisubharmonic. Then we define $S(\varphi)$
to be the set of $[v]\in\mbb{P}TX$ such that $\varphi$ is in a neighborhood of
$\pi[v]$ the sum of a plurisubharmonic function and a smooth function that is
strictly plurisubharmonic on any germ of a holomorphic curve defining $[v]$.
\end{defn}

\begin{lem}
Let $\varphi\in\mathcal{C}(X)$ be plurisubharmonic.
\begin{enumerate}[(1)]
\item The set $S(\varphi)$ is open in $\mbb{P}TX$.
\item If $S(\varphi)=\mbb{P}TX$, then $\varphi$ is strictly plurisubharmonic on
$X$.
\item If $\sum_k\varphi_k$ converges uniformly on compact subsets of $X$ where
$\varphi_k\in\mathcal{C}(X)$, then we have $S\bigl(\sum_k\varphi_k\bigr)
\supset\bigcup_k S(\varphi_k)$.
\end{enumerate}
\end{lem}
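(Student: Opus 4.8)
The plan is to reduce everything to one reformulation of membership in $S(\varphi)$ in terms of the complex Hessian (Levi form) $L_\chi$ of a smooth summand $\chi$. I will first record: if $\varphi=\psi+\chi$ on a neighborhood of $x=\pi[v]$ with $\psi$ plurisubharmonic and $\chi$ smooth, then $\chi$ is strictly plurisubharmonic on every germ of a holomorphic curve through $x$ with tangent direction $[v]$ if and only if $L_\chi(x)(v,\bar v)>0$. Restricting $\chi$ to the affine complex line through $x$ in direction $v$ gives one implication; for the other, for a parametrized germ $\gamma$ with $\gamma(0)=x$ and $\gamma'(0)\in\mbb{C}v\setminus\{0\}$ the Laplacian of $\chi\circ\gamma$ at $0$ is a positive multiple of $L_\chi(x)(v,\bar v)$, so positivity at $x$ propagates to a neighborhood of $x$ in the curve by continuity of $L_\chi$ and of $\gamma'$.

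With this in hand, part (1) is immediate: if $[v_0]\in S(\varphi)$ is witnessed by $\varphi=\psi+\chi$ over a neighborhood $U$ of $\pi[v_0]$, then $[w]\mapsto L_\chi(\pi[w])(w,\bar w)/\norm{w}^2$ (computed with any Hermitian metric) is continuous on $\mbb{P}TU$ and positive at $[v_0]$, hence positive on an open neighborhood $W$ of $[v_0]$, and the very same decomposition witnesses $[w]\in S(\varphi)$ for all $[w]\in W$. Part (3) is just as soft: I would write $\varphi=\sum_k\varphi_k$, note it is plurisubharmonic as a locally uniform limit of plurisubharmonic partial sums, and, given $[v]\in S(\varphi_{k_0})$ witnessed by $\varphi_{k_0}=\psi+\chi$ near $\pi[v]$, observe that $\varphi=\bigl(\psi+\sum_{k\ne k_0}\varphi_k\bigr)+\chi$, where $\sum_{k\ne k_0}\varphi_k=\varphi-\varphi_{k_0}$ is again plurisubharmonic by the same argument, so the first bracket is continuous and plurisubharmonic and the second is the old $\chi$ — hence $[v]\in S(\varphi)$.

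Part (2) is where the real work and the only genuine obstacle sit, namely patching decompositions that are good one direction at a time into a single one good in all directions. Since strict plurisubharmonicity is local, I would fix $x_0\in X$ and a coordinate ball $B\ni x_0$ over which $\mbb{P}TX$ is trivial, so $\mbb{P}T_{x_0}X$ is compact. For each $[v]\in\mbb{P}T_{x_0}X$, using $S(\varphi)=\mbb{P}TX$ and the reformulation above, choose an open $W_{[v]}\subset\mbb{P}TX$ containing $[v]$ and a decomposition $\varphi=\psi_{[v]}+\chi_{[v]}$ on $\pi(W_{[v]})$ with $L_{\chi_{[v]}}>0$ on $W_{[v]}$; by compactness pass to a finite subcover, and after a harmless shrinking assume we have $W_1,\dots,W_m$ with $\ol{W_j}$ compact, $\bigcup_jW_j\supset\mbb{P}T_{x_0}X$, decompositions $\varphi=\psi_j+\chi_j$, and constants $c_j>0$ with $L_{\chi_j}(x)(w,\bar w)\ge c_j\norm{w}^2$ for all $(x,[w])\in\ol{W_j}$. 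Using properness of $\pi$ I would then fix a ball $U\ni x_0$, relatively compact in $B$ and contained in each $\pi(W_j)$, with $\pi^{-1}(\ol U)\subset\bigcup_jW_j$, and set $\epsilon:=\min_jc_j>0$; thus every $(x,[w])\in\pi^{-1}(\ol U)$ lies in some $W_j$ with $L_{\chi_j}(x)(w,\bar w)\ge\epsilon\norm{w}^2$.

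Finally I would claim that $\varphi-\epsilon\abs{z}^2$ is plurisubharmonic on $U$ (here $\abs{z}^2$ denotes the squared Euclidean norm in the coordinates of $B$, with Levi form $\norm{\cdot}^2$). Plurisubharmonicity of a continuous function on $U$ can be checked on affine complex lines, and subharmonicity there is a local property; so I fix such a line $\ell\subset U$ with its constant direction $[w_0]$, fix $x'\in\ell$, choose $j$ with $(x',[w_0])\in W_j$, and observe that near $x'$ on $\ell$ one has $(\varphi-\epsilon\abs{z}^2)|_\ell=\psi_j|_\ell+(\chi_j-\epsilon\abs{z}^2)|_\ell$, where $\psi_j|_\ell$ is subharmonic and $(\chi_j-\epsilon\abs{z}^2)|_\ell$ has Laplacian $L_{\chi_j}(\cdot)(w_0,\bar w_0)-\epsilon\norm{w_0}^2$, which is positive at $x'$ and therefore $\ge0$ near $x'$ by continuity. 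Hence $\varphi=(\varphi-\epsilon\abs{z}^2)+\epsilon\abs{z}^2$ exhibits $\varphi$ locally as a continuous plurisubharmonic function plus a smooth strictly plurisubharmonic one, which is exactly the claim. The hard part throughout is this last argument — in particular the simultaneous choice of $U$ and $\epsilon$ making the finitely many direction-wise Levi estimates uniform, which is where compactness of the fibres of $\mbb{P}TX$ and of $\ol U$ is indispensable.
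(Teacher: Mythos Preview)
Your argument is correct in all three parts. The paper itself gives no proof here, simply citing \cite[Proposition~1.3]{Hir}, so there is nothing to compare against; you have in effect supplied the details that the reference presumably contains, and your approach (reduce to the Levi form of the smooth summand, use compactness of the projectivized fibre for part~(2), and check plurisubharmonicity of $\varphi-\epsilon\lvert z\rvert^2$ on affine lines) is the natural one.

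One small imprecision in part~(2): you write that the Laplacian of $(\chi_j-\epsilon\lvert z\rvert^2)|_\ell$ is ``positive at $x'$'', but with $\epsilon=\min_jc_j$ you only know it is $\ge(c_j-\epsilon)\norm{w_0}^2\ge0$, which may vanish. This does not matter: since $W_j$ is open and the direction $[w_0]$ is constant along $\ell$, for $x''$ near $x'$ on $\ell$ the point $(x'',[w_0])$ still lies in $W_j$, and the same estimate $L_{\chi_j}(x'')(w_0,\bar w_0)\ge c_j\norm{w_0}^2$ gives nonnegativity directly, without the continuity step. Alternatively, just take $\epsilon$ strictly smaller than $\min_jc_j$.
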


\begin{proof}
This is~\cite[Proposition~1.3]{Hir}.
\end{proof}

For any plurisubharmonic function $\varphi\in\mathcal{C}(X)$ we define
$C(\varphi)$ to be
\begin{equation*}
\mbb{P}TX\setminus
\bigl\{[v]\in\mbb{P}TX;\text{ $\varphi$ is smooth around
$\pi[v]$ and } \partial\varphi(v)\not=0\bigr\}
\end{equation*}
and then set
\begin{equation}\label{Eqn:C(X)}
C(X):=\bigcap_{\substack{\varphi\in\mathcal{C}(X)\\\text{plurisubharmonic}}}
C(\varphi).
\end{equation}
Every set $C(\varphi)$ (and thus $C(X)$) is closed in $\mbb{P}TX$. The next lemma
is a slight gene\-ralization of~\cite[Proposition~1.5]{Hir}.

\begin{lem}\label{Lem:StrPsh}
Let $X$ be a complex manifold and let $\Omega:=X\setminus\pi\bigl(C(X)\bigr)$.
Then there exists a plurisubharmonic function $\psi\in\mathcal{C}(X)$ which is
strictly plurisubharmonic on $\Omega$.
\end{lem}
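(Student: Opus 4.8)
The plan is to imitate Hirschowitz' proof of~\cite[Proposition~1.5]{Hir}, the only new point being that one has to carry the open set $\Omega$ along rather than assume $C(X)=\emptyset$. Since $C(X)\subset\pi^{-1}\bigl(\pi(C(X))\bigr)$, the set $\pi^{-1}(\Omega)=\mbb{P}TX\setminus\pi^{-1}\bigl(\pi(C(X))\bigr)$ is contained in $\mbb{P}TX\setminus C(X)$, and it suffices to produce $\psi\in\mathcal{C}(X)$ plurisubharmonic with $\pi^{-1}(\Omega)\subset S(\psi)$ and then conclude. The strategy has four steps: (i) attach to every direction $[v]\notin C(X)$ a plurisubharmonic function in $\mathcal{C}(X)$ having $[v]$ in its $S$-set; (ii) use openness of $S$ (part~(1) of the preceding lemma) and second countability of $\mbb{P}TX$ to reduce to countably many such functions; (iii) add them up with small positive coefficients so that part~(3) applies; (iv) restrict to $\Omega$ and invoke part~(2).

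For step~(i), fix $[v]\notin C(X)$. By the definition~\eqref{Eqn:C(X)} of $C(X)$ there is a plurisubharmonic $\varphi\in\mathcal{C}(X)$ with $[v]\notin C(\varphi)$, i.e.\ $\varphi$ is smooth near $\pi[v]$ and $\partial\varphi(v)\neq0$. I would replace $\varphi$ by $e^{\varphi}$: this is continuous and plurisubharmonic on all of $X$ (the function $t\mapsto e^{t}$ being convex and increasing), it is smooth wherever $\varphi$ is, and near $\pi[v]$ one has $i\partial\bar\partial e^{\varphi}=e^{\varphi}\bigl(i\partial\bar\partial\varphi+i\partial\varphi\wedge\bar\partial\varphi\bigr)\ge e^{\varphi}\,i\partial\varphi\wedge\bar\partial\varphi\ge0$. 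Hence for a germ of holomorphic curve $C$ through $\pi[v]$ with tangent $[v]$ the restriction $e^{\varphi}|_C$ has Laplacian at $\pi[v]$ at least $e^{\varphi(\pi[v])}\,\abs{\partial\varphi(v)}^2>0$, and since the tangent of $C$ varies continuously, $e^{\varphi}|_C$ is strictly subharmonic on a small enough representative of the germ. Writing $e^{\varphi}=0+e^{\varphi}$ on a neighborhood of $\pi[v]$ exhibits the decomposition required by the definition of $S$, so $[v]\in S(e^{\varphi})$.

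By part~(1) of the preceding lemma the sets $S(e^{\varphi})$ produced this way are open, so they cover the open set $\mbb{P}TX\setminus C(X)$, and second countability of $\mbb{P}TX$ yields a countable subcover $\{S(e^{\varphi_k})\}_{k\in\mbb{N}}$. Fixing a compact exhaustion $K_1\subset K_2\subset\cdots$ of $X$ and choosing $c_k>0$ with $c_k\sup_{K_k}e^{\varphi_k}\le2^{-k}$, the series $\psi:=\sum_k c_k e^{\varphi_k}$ converges uniformly on compact subsets of $X$, so $\psi\in\mathcal{C}(X)$ is plurisubharmonic. Multiplication by a positive constant clearly does not change $S$, so part~(3) gives $S(\psi)\supset\bigcup_k S(e^{\varphi_k})\supset\mbb{P}TX\setminus C(X)\supset\pi^{-1}(\Omega)$. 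Finally, strict plurisubharmonicity is local and germs at points of the open set $\Omega$ are the same whether taken in $\Omega$ or in $X$, whence $S(\psi|_{\Omega})=S(\psi)\cap\mbb{P}T\Omega=\mbb{P}T\Omega$; applying part~(2) to the manifold $\Omega$ shows that $\psi$ is strictly plurisubharmonic on $\Omega$.

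The step I expect to need the most care is~(i): turning the mere nonvanishing of $\partial\varphi(v)$ for a function that is only plurisubharmonic (albeit smooth near $\pi[v]$) into genuine strict plurisubharmonicity along $[v]$, in the exact sense used to define $S$, and checking that this survives on a whole germ of curve. The device of passing to $e^{\varphi}$ takes care of this; the rest — openness, extracting the countable subcover, the choice of the $c_k$, and the observation that restricting to $\Omega$ and rescaling preserve membership in $S$ — is routine.
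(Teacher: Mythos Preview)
Your proof is correct and follows essentially the same route as the paper's: exponentiate a plurisubharmonic function witnessing $[v]\notin C(X)$ to obtain $[v]\in S(e^{\varphi})$, extract a countable subcover by openness of the $S$-sets, sum with small positive weights, and apply parts~(2) and~(3) of the preceding lemma. Your choice of coefficients $c_k$ with $c_k\sup_{K_k}e^{\varphi_k}\le 2^{-k}$ is in fact a bit cleaner than the paper's diagonal scheme with the $\lambda_{k,j}$, and it works here because the summands $e^{\varphi_k}$ are positive; you are also slightly more careful than the paper in noting that $\varphi$ need only be smooth near $\pi[v]$ and in spelling out the final restriction to $\Omega$.
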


\begin{proof}
Since $\pi\colon\mbb{P}TX\to X$ is proper, the set $\Omega$ is open in $X$. If
$\Omega$ is empty, there is nothing to prove. Therefore let us suppose that
$\Omega$ is a non-empty open subset of $X$. Consequently, $\mbb{P}TX\setminus
C(X)$ is non-empty.

For every $[v]\in\mbb{P}TX\setminus C(X)$ we find a plurisubharmonic function
$\varphi_{[v]}\in\mathcal{C}^\infty(X)$ with $\partial\varphi_{[v]}v\not=0$. We
claim that the function $\psi_{[v]}:=\exp\circ\,\varphi_{[v]}$ is strictly
plurisubharmonic in the direction of $[v]$. To see this, we calculate
\begin{equation*}
\partial\ol{\partial}\psi_{[v]}=e^{\varphi_{[v]}}\bigl( \partial\varphi_{[v]}
\wedge\ol{\partial}\varphi_{[v]} + \partial\ol{\partial}\varphi_{[v]}\bigr).
\end{equation*}
In other words, we obtain $[v]\in S(\psi_{[v]})$. Since $X$ has countable
topo\-logy, we get an open covering
\begin{equation*}
\mbb{P}TX\setminus C(X)\subseteq\bigcup_{k=1}^\infty S(\psi_k).
\end{equation*}
It is possible to find $\lambda_k>0$ such that $\sum_{k=1}^\infty\lambda_k
\psi_k$ converges uniformly on compact subsets of $X$. To prove this, choose a
countable exhaustion $X=\bigcup_jK_j$ by compact sets with $K_j\subset
\mathring{K}_{j+1}$. For every $j$ there are $\lambda_{k,j}>0$ such that
\begin{equation*}
\sum_{k=1}^\infty\lambda_{k,j}\norm{\psi_k}_{K_j}
\end{equation*}
converges. Since $\norm{\psi_k}_{K_j}\leq\norm{\psi_k}_{K_{j+1}}$ for every $j$,
we may suppose that $\lambda_{k,j'}\leq\lambda_{k,j}$ for all $j\leq j'$.
Defining $\lambda_k:=\lambda_{k,k}$ and noting that every compact subset
$K\subset X$ is contained in $K_{j_0}$ for some $j_0$, we conclude
\begin{align*}
\sum_{k=1}^\infty\lambda_k\norm{\psi_k}_K&\leq
\sum_{k=1}^\infty\lambda_k\norm{\psi_k}_{K_{j_0}}\\
&\leq\sum_{k=1}^{j_0}\lambda_k\norm{\psi_k}_{K_{j_0}}
+\sum_{k=j_0+1}^\infty\lambda_{k,j_0}\norm{\psi_k}_{K_{j_0}}<\infty
\end{align*}
which proves the claim. It follows that the limit function $\psi:=\sum_k\psi_k$
is continuous and satisfies $S(\psi)\supset\bigcup_k S(\psi_k)\supset
\mbb{P}TX\setminus C(X)$, hence it is strictly plurisubharmonic on $\Omega$.
\end{proof}

In the following we say that a complex manifold $X$ is \emph{pseudoconvex} if
there is a continuous plurisubharmonic exhaustion function $\rho\colon
X\to\mbb{R}^{>0}$.

\begin{lem}\label{Lem:InnerIntCurve}
Let $X$ be a pseudoconvex complex manifold and let $\gamma\colon U\to X$ be
the integral curve of a holomorphic vector field on $X$ where $U$ is a domain in
$\mbb{C}$. If $\gamma'(U)$ meets $C(X)$, then $\gamma'(U)$ is contained in
$C(X)$. If $X$ admits a smooth plurisubharmonic exhaustion function, then
$\gamma'(U)\subset C(X)$ implies that $\gamma(U)$ is relatively compact in $X$.
In particular, in this case we have $U=\mbb{C}$.
\end{lem}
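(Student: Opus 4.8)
The plan is to handle the two assertions separately. Write $G(z):=[\gamma'(z)]\in\mbb{P}TX$, which is defined as soon as $\gamma$ is non-constant (the constant case being vacuous: then $\gamma'\equiv 0$ and already $U=\mbb{C}$). For the first assertion I would prove that
\[
A:=\{z\in U;\ G(z)\in C(X)\}
\]
is both closed and open in $U$; since $U$ is connected and $A\neq\emptyset$ by hypothesis, this gives $A=U$, i.e.\ $\gamma'(U)\subset C(X)$. Closedness is immediate, $G$ being continuous and $C(X)$ closed in $\mbb{P}TX$. For openness I would use that $\gamma$ is the integral curve of a holomorphic vector field $V$: if $\Phi$ denotes the local flow of $V$, then $\gamma(z_0+s)=\Phi_s(\gamma(z_0))$ and hence $\gamma'(z_0+s)=d\Phi_s\bigl(\gamma'(z_0)\bigr)$ for small $|s|$, so $z\mapsto G(z)$ is locally an orbit of the flow $\tilde\Phi$ that $V$ induces on $\mbb{P}TX$. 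Since each $\Phi_s$ is a biholomorphism onto its image, composing a continuous plurisubharmonic function with $\Phi_s$ again gives a continuous plurisubharmonic function and preserves smoothness as well as the non-vanishing of $\partial(\cdot)$ along the transported direction; this should make the condition defining $C(X)$ invariant under $\tilde\Phi$, so that $G(z_0)\in C(X)$ forces $G(z)\in C(X)$ for all $z$ near $z_0$, whence $A$ is open.

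I expect the main obstacle to lie precisely in the previous step: $C(X)$ is built from \emph{globally} defined plurisubharmonic functions, while $\Phi_s$ is only a local biholomorphism, so one must promote the locally defined $\varphi\circ\Phi_s$ to a global continuous plurisubharmonic function on $X$ before it can be played against $C(X)$. This is where pseudoconvexity should enter: after shrinking the domain of $\varphi\circ\Phi_s$, I would try to glue it with a large multiple of a continuous plurisubharmonic exhaustion of $X$, arranged so that the maximum of the two equals $\varphi\circ\Phi_s$ near $\gamma(z_0)$ and equals the exhaustion term away from it. Getting this gluing — together with the smoothness clause in the definition of $C(\varphi)$ — to work cleanly is the delicate part.

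For the second assertion, fix a \emph{smooth} plurisubharmonic exhaustion $\rho\colon X\to\mbb{R}^{>0}$ and set $u:=\rho\circ\gamma$, a smooth real-valued function on $U$. Since $\gamma$ is holomorphic, $\frac{\partial u}{\partial z}(z)=\partial\rho\bigl(\gamma'(z)\bigr)$ for every $z$; and as $\rho$ is smooth everywhere, the hypothesis $G(z)\in C(X)\subset C(\rho)$ means precisely $\partial\rho(\gamma'(z))=0$. Hence $\frac{\partial u}{\partial z}\equiv 0$ on $U$, so (since $u$ is real-valued) $du\equiv 0$ and $u\equiv c$ for a constant $c>0$. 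Thus $\gamma(U)\subset\rho^{-1}(c)$, a closed subset of the compact sublevel set $\{\rho\le c\}$, hence compact; so $\gamma(U)$ is relatively compact in $X$. Finally, writing $K:=\ol{\gamma(U)}$, the vector field $V$ has a local flow with a time of existence that is uniform over a neighbourhood of the compact set $K$; so if $U\neq\mbb{C}$, picking $z_*\in\partial U$ and $z_n\to z_*$ in $U$, passing to a subsequence with $\gamma(z_n)\to p\in K$, one sees that $z\mapsto\Phi_{z-z_*}(p)$ extends $\gamma$ holomorphically past $z_*$, contradicting maximality of $U$; therefore $U=\mbb{C}$. I regard this last part as routine, the real work being the globalisation issue in the first assertion.
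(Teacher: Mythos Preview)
Your proposal is correct and follows essentially the same route as the paper: the open--closed argument on $A$, the pull-back $\varphi_t=\varphi\circ\Phi_t$ and its globalisation via a $\max$ with the exhaustion, and for the second assertion the observation that $\rho\circ\gamma$ is constant. One small correction on the gluing: a ``large multiple'' of $\rho$ would be large near $\gamma(z_0)$ as well, so the paper instead takes $\max\bigl(\varphi_t,\chi\circ\rho\bigr)$ for a convex increasing $\chi$ chosen with $\chi(\rho(x_0))<\varphi_t(x_0)$ and $\chi(\beta)>\norm{\varphi_t}_{\rho^{-1}(\beta)}$ for some intermediate level $\beta$ --- exactly the kind of adjustment you anticipated when calling this step delicate.
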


\begin{proof}
Let $\xi$ be the holomorphic vector field on $X$ with integral curve $\gamma$
and suppose that $\gamma'(0)=\xi(x_0)\in C(X)$. It is enough to show that $0$
is an inner point of the set of $t\in U$ with $\gamma'(t)\in C(X)$,
for then the closed set $(\gamma')^{-1}\bigl(C(X)\bigr)$ is also open, hence
equal to $U$. In other words, we must prove that for every plurisubharmonic
function $\varphi\in\mathcal{C}(X)$ smooth in a neighborhood of $x_t:=\gamma
(t)$ we have $\xi(\varphi)(x_t)=0$ whenever $\abs{t}$ is sufficiently small.

To do this, choose $\alpha\in\mbb{R}^{>0}$ such that $x_0=\gamma(t_0)\in
X_\alpha:=\bigl\{x\in X;\ \rho(x)<\alpha\bigr\}$ where $\rho$ is a continuous
plurisubharmonic exhaustion function of $X$. Let $\Phi^\xi$ be the holomorphic
local flow of $\xi$. For $\abs{t}$ sufficiently small we have
\begin{equation*}
\Phi^\xi_t(X_{\alpha+1})\supset X_\alpha\ni x_t:=\Phi^\xi_t(x_0).
\end{equation*}
Since $\Phi^\xi_t\colon X_\alpha\to X_{\alpha+1}$ is holomorphic,
$\varphi_t:=\varphi\circ\Phi^\xi_t$ is continuous plurisubharmonic on
$X_\alpha$ and smooth in a neighborhood of $x_0$ for each plurisubharmonic
function $\varphi\in\mathcal{C} (X_{\alpha+1})$ that is smooth in a neighborhood
of $x_t$. Following the proof of~\cite[Proposition~1.6]{Hir} we
construct a continuous plurisubharmonic function $\psi_t$ on $X$ which
coincides with $\varphi_t$ in a neighborhood of $x_0$. Choose $\beta\in
\mbb{R}$ such that $\varphi_t(x_0)<\beta<\alpha$ and note that $K:=\rho^{-1}
(\beta)\subset X_\alpha$ is compact. Then choose a convex increasing function
$\chi$ on $\mbb{R}$ fulfilling
\begin{align*}
\chi\bigl(\rho(x_0)\bigr) &< \varphi_t(x_0)\quad\text{ and}\\
\chi(\beta) &> \norm{\varphi_t}_K.
\end{align*}
Finally, define $\psi_t\colon X\to\mbb{R}$ by
\begin{equation*}
\psi_t(x):=
\begin{cases}
\max\bigl(\varphi_t(x), \chi\circ\rho(x)\bigr) &: \rho(x)\leq\beta\\
\chi\circ\rho(x) &: \rho(x)\geq\beta.
\end{cases}
\end{equation*}
One checks directly that $\psi_t$ is continuous plurisubharmonic and
coincides with $\varphi_t$ in some neighborhood of $x_0$. Consequently, we
may calculate
\begin{equation*}
\xi\varphi(x_t)=\left.\frac{d}{ds}\right|_t\varphi\bigl(\Phi^\xi_s
(x_0)\bigr)=\xi\varphi_t(x_0)=\xi\psi_t(x_0)=0
\end{equation*}
since $\xi_{x_0}=\gamma'(0)\in C(X)$. Therefore we see that $\gamma'(t)\in C(X)$
for every $t\in U$ sufficiently close to $0$, which proves the first part of
the lemma.

If $\rho$ is smooth, then choosing $\varphi=\rho$ in the argument given above, we
see that $\gamma(U)$ lies in a fiber of $\rho$, hence is relatively compact.
\end{proof}

\section{Statement of the Main Theorem}

Let us fix $a_1,a_2\in\mbb{C}$ such that $0<\abs{a_1}\leq\abs{a_2}<1$. The
automorphism $\varphi\colon\mbb{C}^2\setminus\{0\}\to\mbb{C}^2\setminus\{0\}$,
$(z_1,z_2)\mapsto(a_1z_1,a_2z_2)$, generates a free proper $\mbb{Z}$-action on
$\mbb{C}^2\setminus\{0\}$. By definition, the compact complex surface
$H_a:=(\mbb{C}^2\setminus\{0\})/\mbb{Z}$ for $a=(a_1,a_2)$ is a \emph{primary
Hopf surface}. We will write $[z_1,z_2]:=\pi(z_1,z_2)$ where
$\pi\colon\mbb{C}^2\setminus\{0\}\to H_a$ is the quotient map.

The torus $T=\mbb{C}^*\times\mbb{C}^*$ acts holomorphically on $H_a$ with three
orbits. More precisely, we have $H_a=E_1\cup H_a^*\cup E_2$ where $H_a^*:=
(\mbb{C}^*\times\mbb{C}^*)/\mbb{Z}$ is the open $T$-orbit, and where
$E_1:=(\mbb{C}^*\times \{0\})/\mbb{Z}=T\cdot[1,0]$ and
$E_2:=(\{0\}\times\mbb{C}^*)/\mbb{Z}=T\cdot[0,1]$ are elliptic curves.

Note that $H_a^*$ is a connected Abelian complex Lie group which thus can be
represented as $\mbb{C}^2/\Gamma_3$ where $\Gamma_3$ is a discrete subgroup of
rank $3$ of $\mbb{C}^2$. The map $p\colon\mbb{C}^2\to\mbb{C}^2/\Gamma_3 \cong
H_a^*$ is the universal covering of $H_a^*$. Let $V$ be the real span of
$\Gamma_3$ and set $W:=V\cap iV$. There are two possibilities. Either $p(W)$ is
dense in $V/\Gamma_3\cong(S^1)^3$, or $p(W)$ is closed, hence compact, hence an
elliptic curve $E$. In the first case, we have $\mathcal{O}(H_a^*)=\mbb{C}$,
i.e., $H_a^*$ is a \emph{Cousin group}, while in the second case
$H_a^*\cong\mbb{C}^*\times E$.

For the following result we refer the reader to~\cite[Chapter~V.18]{BHPV}.

\begin{prop}
The open orbit $H_a^*$ is not Cousin if and only if $a_1^{k_1}=a_2^{k_2}$ for
some relatively prime $k_1,k_2\in \mbb{Z}$.
\end{prop}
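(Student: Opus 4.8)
The plan is to reduce the assertion to a statement about Laurent expansions on the covering $\mbb{C}^*\times\mbb{C}^*\to H_a^*$ obtained by restricting $\pi$. Recall that, by the discussion above, $H_a^*$ is Cousin precisely when $\mathcal{O}(H_a^*)=\mbb{C}$, the only alternative being $H_a^*\cong\mbb{C}^*\times E$, which manifestly carries non-constant holomorphic functions; so ``$H_a^*$ is not Cousin'' is equivalent to ``$\mathcal{O}(H_a^*)\neq\mbb{C}$''. Hence it suffices to prove
\[
\mathcal{O}(H_a^*)\neq\mbb{C}\quad\Longleftrightarrow\quad a_1^{k_1}a_2^{k_2}=1\ \text{for some }(k_1,k_2)\in\mbb{Z}^2\setminus\{0\},
\]
and then to rewrite the relation on the right as $a_1^{k_1}=a_2^{-k_2}$.

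The implication ``$\Leftarrow$'' is immediate and needs no primitivity: if $a_1^{k_1}a_2^{k_2}=1$ with $(k_1,k_2)\neq(0,0)$, the Laurent monomial $g(z_1,z_2)=z_1^{k_1}z_2^{k_2}$ satisfies $g\circ\varphi=(a_1^{k_1}a_2^{k_2})\,g=g$, hence descends to a holomorphic function on $H_a^*$ which is non-constant because $g$ is. For ``$\Rightarrow$'', take a non-constant $f\in\mathcal{O}(H_a^*)$ and lift it to a $\varphi$-invariant, non-constant $\tilde f\in\mathcal{O}(\mbb{C}^*\times\mbb{C}^*)$. Since $\mbb{C}^*\times\mbb{C}^*$ is a Reinhardt domain meeting no coordinate axis, $\tilde f$ has a Laurent expansion $\tilde f=\sum_{(m,n)\in\mbb{Z}^2}c_{mn}z_1^mz_2^n$ converging on all of $\mbb{C}^*\times\mbb{C}^*$. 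Substituting $\varphi$ and comparing coefficients gives $c_{mn}(a_1^ma_2^n-1)=0$ for every $(m,n)$; since $\tilde f$ is non-constant there is $(m_0,n_0)\neq(0,0)$ with $c_{m_0n_0}\neq0$, whence $a_1^{m_0}a_2^{n_0}=1$. This proves the displayed equivalence.

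The step I expect to be the genuine obstacle is making the exponents relatively prime, as the statement demands. Here $0<\abs{a_1}\leq\abs{a_2}<1$ is used decisively: taking moduli in $a_1^{m_0}a_2^{n_0}=1$ and using $\log\abs{a_1},\log\abs{a_2}<0$ forces $m_0,n_0$ to be non-zero of opposite sign and confines every integer solution of $a_1^ma_2^n=1$ to the line $m\log\abs{a_1}+n\log\abs{a_2}=0$, so $\Lambda:=\{(m,n)\in\mbb{Z}^2:a_1^ma_2^n=1\}$ has rank at most one. Writing $d=\gcd(m_0,n_0)$ and $(p,q)=(m_0/d,n_0/d)$ one gets $(a_1^pa_2^q)^d=1$, so $\zeta:=a_1^pa_2^q$ is a $d$-th root of unity and one wants $\zeta=1$, i.e.\ $\Lambda=\mbb{Z}(p,q)$. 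I would try to read this off the decomposition $H_a^*\cong\mbb{C}^*\times E$: the projection to the $\mbb{C}^*$-factor is a surjective holomorphic homomorphism $H_a^*\to\mbb{C}^*$, necessarily a monomial $[z_1,z_2]\mapsto z_1^{k_1}z_2^{-k_2}$ with $(k_1,-k_2)\in\Lambda$ and with kernel the connected group $E$, and one would try to deduce $\gcd(k_1,k_2)=1$ from that connectedness. One should note, however, that this final reduction is not valid for every admissible $a$: for $a_1=-a_2$ one has $\Lambda=\mbb{Z}(2,-2)$, the smallest relation being $a_1^2=a_2^2$ with no primitive one, so the statement is best read as the displayed equivalence with ``$(k_1,k_2)\neq(0,0)$'' in place of ``relatively prime $k_1,k_2$''.
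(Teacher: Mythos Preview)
The paper does not prove this proposition at all; it simply refers the reader to \cite[Chapter~V.18]{BHPV}. So there is no ``paper's own argument'' to compare against, and your Laurent-expansion proof of the equivalence
\[
\mathcal{O}(H_a^*)\neq\mbb{C}\quad\Longleftrightarrow\quad a_1^{m}a_2^{n}=1\ \text{for some }(m,n)\in\mbb{Z}^2\setminus\{0\}
\]
is a correct and self-contained substitute for that citation.

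Your final paragraph is the most valuable part. You correctly isolate the one place where the argument does not close up, namely the passage from ``some nonzero exponent pair'' to ``some relatively prime exponent pair'', and your counterexample $a_1=-a_2$ is genuine: here $\Lambda=\{(m,n):a_1^ma_2^n=1\}=\mbb{Z}(2,-2)$, so $H_a^*$ is not Cousin (indeed $[z_1,z_2]\mapsto z_1^2z_2^{-2}$ is a nonconstant holomorphic function and the $\mbb{Z}$-action identifies the two components of its fibers, giving $H_a^*\cong\mbb{C}^*\times E$), yet no coprime pair satisfies $a_1^{k_1}=a_2^{k_2}$. Your diagnosis is therefore right: the proposition is correct with ``$(k_1,k_2)\neq(0,0)$'' in place of ``relatively prime'', and the coprimality phrasing in the paper should be read as an informal allusion to the elliptic-fibration picture in the subsequent Remark rather than as a literal claim. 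None of this affects the rest of the paper, where only the dichotomy ``Cousin versus $\mbb{C}^*\times E$'' is used.
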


\begin{rem}
If there exist relatively prime integers $k_1,k_2$ with $a_1^{k_1}=a_2^{k_2}$,
then we have the elliptic fibration $H_a\to\mbb{P}_1$, $[z_1,z_2]\mapsto
[z_1^{k_1}:z_2^{k_2}]$. The generic fiber is the elliptic curve
$E=\mbb{C}^*/(z\sim cz)$ where $c:=a_1^{k_1}=a_2^{k_2}$. Note that for a generic
choice of $a=(a_1,a_2)$ the open subset $H_a^*$ is a Cousin group.
\end{rem}

Suppose that $H_a^*$ is Cousin and let $\xi\in\lie{t}$ be the generator of the
relatively compact one parameter subgroup $p(W)$. Let $\xi_{H_a}$ be the
holomorphic vector field induced by the $T$-action on $H_a$. One checks directly
that $\xi_{H_a}$ has no zeros in $H_a$, hence defines a holomorphic foliation of
$H_a$. Note that the open subset $H_a^*$ is saturated with respect to
$\mathcal{F}$ and that the leaves of $\mathcal{F}|_{H_a^*}$ are relatively compact
in $H_a^*$. The closure of a leaf $F\subset H_a^*$ in $H_a^*$ is a Levi-flat
compact smooth hypersurface. In fact, these Levi-flat hypersurfaces are the
fibers of the pluriharmonic function $[z_1,z_2]\mapsto
\frac{\log\abs{z_1}}{\log\abs{a_1}}-\frac{\log\abs{z_2}}{\log\abs{a_2}}$ defined
on $H_a^*$, see~\cite{LY}. If $H_a$ is elliptic, then it is foliated by elliptic
curves. Again, $H_a^*$ is saturated with respect to this foliation and the leaves
are compact in $H_a^*$. This shows that in both cases we obtain a (singular)
holomorphic foliation $\mathcal{F}$ of $H_a$ such that the leaves of
$\mathcal{F}|_{H_a^*}$ are relatively compact in $H_a^*$.

We now state the main result of this note.

\begin{thm}\label{Thm:Main}
Let $H_a$ be a primary Hopf surface and let $D\subset H_a$ be a pseudoconvex
domain. If $D$ is not Stein, then $D$ contains with every point $p\in D$ the
topological closure $\ol{F}_p$ of the leaf $F\in\mathcal{F}$ passing through $p$.
\end{thm}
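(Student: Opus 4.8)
The plan is to feed the foliation $\mathcal{F}$ of Section~3 into the machinery of Section~2. Let $\zeta$ be the holomorphic vector field without zeros whose orbits are the leaves of $\mathcal{F}$ --- so $\zeta=\xi_{H_a}$ in the Cousin case, while in the elliptic case $\zeta$ is the field tangent to the fibres of $q\colon H_a\to\mbb{P}_1$, $[z_1,z_2]\mapsto[z_1^{k_1}:z_2^{k_2}]$. By Richberg's theorem a pseudoconvex manifold carries a \emph{smooth} plurisubharmonic exhaustion, so Lemma~\ref{Lem:InnerIntCurve} is available for $D$. The first step is that $D$ not Stein forces $C(D)\neq0$: if $C(D)=\emptyset$ then $\Omega=D$, so Lemma~\ref{Lem:StrPsh} gives a continuous plurisubharmonic function on $D$ that is everywhere strictly plurisubharmonic; adding a continuous plurisubharmonic exhaustion and regularising again with Richberg's theorem produces a smooth strictly plurisubharmonic exhaustion, whence $D$ would be Stein.

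The heart of the matter is the \emph{tangency property}: every $[v]\in C(D)$ is tangent to $\mathcal{F}$, so that over a point $x_0\in H_a^*$ one has $[v]=[\zeta(x_0)]$. To establish this over $x_0\in H_a^*$, recall the pluriharmonic function $h:=\frac{\log\abs{z_1}}{\log\abs{a_1}}-\frac{\log\abs{z_2}}{\log\abs{a_2}}$ on $H_a^*$, whose level sets are exactly the leaf closures, so that $\partial h$ vanishes precisely on the $\mathcal{F}$-directions. Near $E_1$ one has $e^{h}=\abs{z_1}^{1/\log\abs{a_1}}\abs{z_2}^{-1/\log\abs{a_2}}=e^{(\text{pluriharmonic})+(\text{plurisubharmonic})}$, so $e^{h}$ extends plurisubharmonically across $E_1$ and hence to all of $H_a\setminus E_2$, and symmetrically $e^{-h}$ extends to $H_a\setminus E_1$. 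Thus, if $D$ avoids $E_1$ or avoids $E_2$ and $[v]$ over $x_0\in H_a^*$ is transverse to $\mathcal{F}$, then $e^{\pm h}$ is plurisubharmonic on $D$, smooth near $x_0$, and satisfies $\partial(e^{\pm h})(v)\neq0$, so $[v]\notin C(D)$ --- a contradiction. Directions of $C(D)$ lying over $E_1\cup E_2$ must be handled separately, using that $E_1$ and $E_2$ are the only compact curves of $H_a$ through such points and that $E_i\cap D$, when a proper subset of $E_i$, is an open Riemann surface. The delicate situation --- which I expect to be the main obstacle --- is the one in which $D$ meets \emph{every} leaf closure (equivalently: $D$ meets both $E_1$ and $E_2$ in the Cousin case, and $q(D)=\mbb{P}_1$ in the elliptic case): here $e^{\pm h}$ is unavailable, and a soft argument (the one below) shows that then either $D=H_a$ or $D$ contains no leaf closure at all, so it remains to prove that a pseudoconvex domain of the latter type is Stein --- presumably the content of Lemma~\ref{Lem:Stein}.

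Granting the tangency property, the proof concludes as follows. Pick $[v_0]\in C(D)$ and set $x_0:=\pi[v_0]$; then $[v_0]=[\zeta(x_0)]$, so the integral curve $\gamma\colon U\to D$ of $\zeta$ through $x_0$ has $[\gamma'(0)]\in C(D)$, and Lemma~\ref{Lem:InnerIntCurve} yields $\gamma'(U)\subset C(D)$, $U=\mbb{C}$, and $\gamma(\mbb{C})$ relatively compact in $D$; since the closure of $\gamma(\mbb{C})$ is $\ol{F}_{x_0}$, we get $\ol{F}_{x_0}\subset D$, so $A:=\{p\in D;\ \ol{F}_p\subset D\}$ is non-empty. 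Now $A$ is open, since leaf closures are compact and vary continuously in the Hausdorff metric. And $A$ is closed in $D$: if $p_n\to p\in D$ with $\ol{F}_{p_n}\subset D$, then the maximum principle on the compact curve --- resp.\ on the minimal Levi-flat hypersurface, by applying Liouville's theorem to a continuous plurisubharmonic exhaustion $\rho$ of $D$ restricted to each dense leaf $\cong\mbb{C}$ --- forces $\rho$ to be constant on $\ol{F}_{p_n}$ with value $\rho(p_n)\to\rho(p)<\infty$; hence the $\ol{F}_{p_n}$, and their Hausdorff limit $\ol{F}_p$, stay in a fixed relatively compact sublevel set of $\rho$, giving $\ol{F}_p\subset D$. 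Since $D$ is connected, $A=D$, which is the assertion.
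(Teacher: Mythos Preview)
Your route --- constrain $C(D)$ to be tangent to $\mathcal{F}$ using the single test function $e^{\pm h}$, then run an open/closed argument on $\{p\in D:\ol{F}_p\subset D\}$ --- is genuinely different from the paper's. The paper never attempts to prove a tangency property. Instead it dichotomizes on whether $D^*:=D\cap H_a^*$ is Stein: if not, it cites~\cite{GMO2} to conclude that $D^*$ (hence $D$) is $\mathcal{F}$--saturated; if $D^*$ \emph{is} Stein, it proves (Lemma~\ref{Lem:innerintegralcurve}) that $\pi\bigl(C(D)\bigr)\subset D\setminus D^*$, so that Lemma~\ref{Lem:StrPsh} produces a function strictly plurisubharmonic on $D^*$ and Lemma~\ref{Lem:Stein} then forces $D$ Stein. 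Your $e^{\pm h}$ argument together with the open/closed step is a pleasant, self-contained replacement for the appeal to~\cite{GMO2} whenever $D$ misses one of the $E_i$.

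The gap, however, lies exactly where you flag it, and Lemma~\ref{Lem:Stein} alone does not close it. In your delicate situation (say $D$ meets both $E_i$ but contains neither), to invoke Lemma~\ref{Lem:Stein} with the Stein submanifold $D\cap(E_1\cup E_2)$ you need a plurisubharmonic function on $D$ that is \emph{strictly} plurisubharmonic on $D^*$; by Lemma~\ref{Lem:StrPsh} this amounts to knowing $\pi\bigl(C(D)\bigr)\cap D^*=\emptyset$. But your only instrument for controlling $C(D)$ over $D^*$ is $e^{\pm h}$, and it is unavailable precisely here: neither $e^h$ nor $e^{-h}$ extends plurisubharmonically across both $E_1$ and $E_2$, and the max--with--exhaustion trick cannot repair this since sublevel sets of the exhaustion may themselves meet both curves. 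What is missing is exactly the content of Lemma~\ref{Lem:innerintegralcurve}: since $T$ acts with open orbit on $H_a^*$, any $[v]\in C(D)$ over $x_0\in D^*$ is $[\eta_{H_a}(x_0)]$ for some $\eta\in\lie{t}$, Lemma~\ref{Lem:InnerIntCurve} makes the orbit $\exp(\mbb{C}\eta)\cdot x_0$ relatively compact in $D$, and one must then verify, by a case analysis on $\eta=(z_1,z_2)$, that the closure of this orbit in $H_a$ contains $E_1$ or $E_2$ whenever it is not already relatively compact in $H_a^*$. That case analysis is the engine that, in your framework, would either force $E_i\subset D$ (contradiction) or give $\pi\bigl(C(D)\bigr)\cap D^*=\emptyset$, finally enabling Lemma~\ref{Lem:Stein}. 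A small side remark: Richberg's theorem regularizes \emph{strictly} plurisubharmonic functions, not arbitrary ones; the paper smooths the exhaustion by convolution against Haar measure on the group $H_a^*$.
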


\begin{rem}
For locally pseudoconvex domains having smooth real-ana\-lytic boundary this
result has been obtained by Levenberg and Ya\-maguchi using the theory of
$c$-Robin functions, see~\cite{LY}.
\end{rem}

\section{Existence of plurisubharmonic exhaustions}

In this section we will show that every smoothly bounded locally pseudoconvex
domain $D\subset H_a$ admits a continuous plurisubharmonic exhaustion function.
For this we will modify Hirschowitz' proof of~\cite[Th\'eor\`eme~2.1]{Hir1}.

\begin{prop}\label{Prop:Exhaustion}
Let $D\subset H_a$ be locally pseudoconvex and suppose that neither $E_1$ nor
$E_2$ is a component of $\partial D$.  Then $D$ admits a continuous
plurisubharmonic exhaustion function.
\end{prop}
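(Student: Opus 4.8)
The plan is the following. If $D=H_a$ there is nothing to prove, so assume $\partial D\neq\emptyset$. Since $H_a$ is compact, $D$ is relatively compact in $H_a$, and a continuous exhaustion of $D$ is precisely a continuous plurisubharmonic function $\rho\colon D\to\mbb{R}$ with $\rho(x)\to+\infty$ as $x\to\partial D$; so it is enough to produce such a $\rho$. I would build it out of a ``generic'' piece living in $H_a^*$ and local pieces living near the elliptic curves $E_1,E_2$, and then glue.

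For the generic piece I would exploit the linear structure of $H_a^*$. Writing $H_a^*\cong\mbb{C}^2/\Gamma_3$ with $\Gamma_3$ acting by translations, the universal covering $p\colon\mbb{C}^2\to H_a^*$ is a local isometry for the Euclidean metric, which therefore descends to a flat Kähler metric $g_0$ on $H_a^*$. The open set $D^*:=D\cap H_a^*$ is locally pseudoconvex in $H_a^*$, because its topological boundary inside $H_a^*$ is contained in $\partial D\cap H_a^*$, where $D$, hence $D^*$, is locally pseudoconvex; consequently $\widehat{D^*}:=p^{-1}(D^*)\subset\mbb{C}^2$ is a locally pseudoconvex, hence Stein, open subset of $\mbb{C}^2$. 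By the classical solution of the Levi problem, $-\log\delta_{\widehat{D^*}}$ — the negative logarithm of the Euclidean distance to $\mbb{C}^2\setminus\widehat{D^*}$ — is plurisubharmonic on $\widehat{D^*}$; since $\Gamma_3$ acts by isometries it is $\Gamma_3$-invariant, and it descends to a continuous plurisubharmonic function $u_0=-\log\delta_{g_0}$ on $D^*$ with $u_0\to+\infty$ along $\partial D\cap H_a^*$ and $u_0\to-\infty$ towards the two ends of $H_a^*$, i.e.\ towards $E_1$ and $E_2$. This is the instance of \cite[Th\'eor\`eme~2.1]{Hir1} adapted to the quotient $\mbb{C}^2/\Gamma_3$.

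Near each $E_j$ the situation is genuinely different, since $H_a$ is not flat there. I would fix a relatively compact tubular neighbourhood $U_j$ of $E_j$, identified with a neighbourhood of the zero section of the flat normal bundle $N_{E_j}\to E_j$; its universal covering is again $\mbb{C}^2$, but the deck group is now generated by a translation and by an \emph{affine contraction} $\gamma_j$ (in suitable coordinates the restriction to $U_j$ of $(z_1,z_2)\mapsto(a_1z_1,a_2z_2)$), whose linear part has an eigenvalue of modulus $<1$. Pulling $D\cap U_j$ back to a locally pseudoconvex (hence Stein) open set $\widehat{D_j}\subset\mbb{C}^2$, invariant under the $\mbb{Z}$-action generated by $\gamma_j$, one must manufacture a $\gamma_j$-invariant plurisubharmonic function on $\widehat{D_j}$ blowing up along the relevant part of $\partial\widehat{D_j}$. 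Because $\gamma_j$ is non-expanding, $-\log$ of the Euclidean distance grows at most linearly along $\gamma_j$-orbits; this is what allows one to run Hirschowitz' construction of such an invariant function by hand in this affine cover, provided the boundary configuration of $D$ along $E_j$ is not the exceptional one — which is exactly where the hypothesis that $E_j$ is not a connected component of $\partial D$ is used. Descending yields a continuous plurisubharmonic $u_j$ on $D\cap U_j$ tending to $+\infty$ along $\partial D\cap U_j$.

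Finally I would assemble the exhaustion. Since $\partial D$ is compact, a careful bookkeeping (using that $E_1,E_2$ sit at the ends of $H_a^*$ and, near each $E_j$, the analogous local statement) lets one cut off $u_0$ and the $u_j$ to equal a large constant $C$ away from $\partial D$; one then defines $\rho$ on $D\cap H_a^*$ as the maximum of these modified functions and $C$, and sets $\rho\equiv C$ on $D\cap(E_1\cup E_2)$. As the maximum of plurisubharmonic functions is plurisubharmonic and the pieces agree where they overlap, $\rho$ is continuous and plurisubharmonic on all of $D$ with $\rho\to+\infty$ along $\partial D$, hence the desired exhaustion. I expect the main obstacle to be the step near the elliptic curves: $H_a$ fails to be homogeneous along $E_1\cup E_2$, and a tubular neighbourhood of $E_j$ carries no strictly plurisubharmonic function (it fibres over the compact curve $E_j$), so the familiar device of absorbing the curvature error of a distance function into a strictly plurisubharmonic term is unavailable; instead one must push Hirschowitz' argument through in the affine cover of $N_{E_j}$, and it is precisely the configuration ``$E_j$ is a connected component of $\partial D$'' for which that argument breaks down — which explains why it is excluded.
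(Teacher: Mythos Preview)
Your plan is quite different from the paper's proof and, in the form you wrote it, has a genuine gap near the elliptic curves.

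The paper does \emph{not} split $D$ into a generic piece in $H_a^*$ and pieces near $E_1,E_2$. Instead it uses a single global construction based on the torus action $T=\mbb{C}^*\times\mbb{C}^*$ on all of $H_a$. One sets
\[
\Omega:=\bigl\{(x,\xi)\in D\times\lie{t}\;;\ \exp(\xi)\cdot x\in D\bigr\},
\]
which is locally pseudoconvex in $D\times\lie{t}$, and defines the ``Lie-algebra boundary distance'' $d(x):=\sup\{r>0:\{x\}\times B_r(0)\subset\Omega\}$. By Lelong's theorem, $-\log d$ is plurisubharmonic; the key point is that for \emph{every} $x\in H_a$, including $x\in E_1\cup E_2$, the orbit map $\lie{t}\to T\cdot x$ is open onto its image, and this is what makes $-\log d$ blow up along $\partial D$. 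The hypothesis that neither $E_j$ is a component of $\partial D$ is used only to exclude the degenerate situation $D\supset H_a^*$, in which $-\log d\equiv-\infty$. No separate analysis near $E_1,E_2$ is needed.

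Your Step~2 is where the proposal fails to be a proof. You correctly identify that on the universal cover of a tubular neighbourhood of $E_j$ the deck group contains an affine contraction $\gamma_j$, and that the Euclidean distance is therefore not $\gamma_j$-invariant. But the sentence ``$-\log$ of the Euclidean distance grows at most linearly along $\gamma_j$-orbits; this is what allows one to run Hirschowitz' construction'' is not an argument: linear growth along orbits does not by itself yield an invariant plurisubharmonic function blowing up at the boundary, and you give no actual construction. You also say you ``expect the main obstacle to be the step near the elliptic curves'' --- that expectation is correct, and it is precisely the obstacle the paper's uniform $T$-action approach bypasses. Your use of the hypothesis is likewise speculative: you invoke it to rule out an unspecified ``exceptional boundary configuration'' near $E_j$, whereas in the paper it serves a much more limited and transparent purpose.

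In short: the generic piece $u_0$ on $D^*$ is fine and close in spirit to the paper's construction restricted to $H_a^*$, but the local pieces $u_j$ near $E_j$ are asserted rather than built. The paper's insight --- measure distance via the $T$-action rather than via a metric --- is exactly what makes the problem near $E_1,E_2$ disappear.
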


\begin{rem}
The hypothesis of Proposition~\ref{Prop:Exhaustion} is fulfilled if $D$ is
locally pseudoconvex and smoothly bounded. Hence, Theorem~\ref{Thm:Main} indeed
generalizes the main result of~\cite{LY}.
\end{rem}

\begin{proof}
We define $\Omega:=\bigl\{(x,\xi)\in D\times\lie{t}\; ;\ \exp(\xi)\cdot x\in
D\bigr\}$. By definition, $\Omega$ is an open subset of $D\times\lie{t}$
containing $D\times\{0\}$. Since $D$ is locally pseudoconvex in $H_a$, it
follows that $\Omega$ is locally pseudoconvex in $D\times\lie{t}$.

We define the boundary distance $d\colon D\to\mbb{R}^{>0}$ by
\begin{equation*}
d(x):=\sup\bigl\{r>0;\ \{x\}\times B_r(0)\subset\Omega\bigr\}.
\end{equation*}
It is elementary to check that $d$ is lower semicontinuous. Since $\Omega$ is
locally pseudoconvex in $D\times T$, every point $x\in\ol{D}$ has an open Stein
neighborhood $U$ such that $\Omega\cap(U\times\lie{t})$ is pseudoconvex. Due to
a result of Lelong, see~\cite[Theorem~2.4.2]{Lel}, the function $-\log d$ is
plurisubharmonic on $U\cap D$ and therefore everywhere on $D$.

Note that $-\log d\equiv-\infty$ if and only if $D$ contains $H_a^*$. Since
neither $E_1$ nor $E_2$ is a component of $\partial D$, this implies $D=H_a$ so
that we may exclude this case in the following.

For every $x\in H_a$ the orbit map $\lie{t}\to T\cdot x$, $\xi\mapsto
\exp(\xi)\cdot x$, is open into its image $T\cdot x$. Thus we see that $-\log
d(x)$ goes to infinity as $x$ approaches a point in $\partial D$. Since
$\partial D$ is compact, this implies that $-\log d$ is an exhaustion.

To end this proof, one verifies directly that $d$ is upper semicontinuous in
any point $x\in D$ such that $d(x)\not=\infty$. Therefore, for every constant
$C>0$ the map $\sup(C,-\log d)$ is a continuous plurisubharmonic exhaustion of
$D$.
\end{proof}

\begin{rem}
The proof of Proposition~\ref{Prop:Exhaustion} shows that the polar set given by
$\{-\log d=-\infty\}$ is non-empty if and only of $D$ contains $E_1$ or $E_2$.
\end{rem}

\begin{rem}
In~\cite{DF}, Diederich and Forn\ae ss give an example of a re\-latively compact
pseudoconvex domain in a $\mbb{P}_1$-bundle over a Hopf surface that has smooth
real-analytic boundary but does \emph{not} admit an exhaustion by pseudoconvex
subdomains.
\end{rem}

\section{Proof of Theorem~\ref{Thm:Main}}

Let us start by noting the following simple but important observation.

\begin{lem}
The domain $H_a^*$ admits a smooth plurisubharmonic exhaustion function.
Consequently, if $D\subset H_a$ is pseudoconvex, then $D^*:=D\cap H_a^*$ is
likewise pseudoconvex.
\end{lem}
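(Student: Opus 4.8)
The plan is to exhibit an explicit smooth plurisubharmonic exhaustion of $H_a^* = (\mbb{C}^*\times\mbb{C}^*)/\mbb{Z}$. The natural candidate is built from the two functions $u_j([z_1,z_2]) := \log\abs{z_j}$, $j=1,2$, which are well-defined only up to the $\mbb{Z}$-action $u_j \mapsto u_j + \log\abs{a_j}$, so they do not descend to $H_a^*$; however, their differentials $du_j$ do descend, as do any $\mbb{Z}$-periodic combinations. The first step is therefore to pass to coordinates adapted to the period lattice: writing $s_j := \log\abs{z_j}/\log\abs{a_j}$ one has $s_j \mapsto s_j + 1$ under the generator $\varphi$, and the two pluriharmonic functions on $\mbb{C}^*\times\mbb{C}^*$ given by $h := s_1 - s_2$ and the "diagonal" coordinate descend appropriately: $h$ is already $\mbb{Z}$-invariant and hence pluriharmonic on $H_a^*$ (this is exactly the function appearing in the paragraph before Theorem~\ref{Thm:Main}), while $s_1 + s_2$ shifts by $2$.

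Next I would exploit that $H_a^*$ is Cousin, so $h$ must be used with care; the key point is that the metric/combinatorial data let one form a function of the shape $\rho := \cos(2\pi s_1) + \cos(2\pi s_2)$ — wait, this is \emph{not} plurisubharmonic. Let me instead take the approach that actually works: since $H_a^*$ is a complex Lie group covered by $p\colon\mbb{C}^2 \to H_a^* \cong \mbb{C}^2/\Gamma_3$ with $\Gamma_3$ of \emph{real} rank $3$, the quotient $\mbb{C}^2/V$ by the real span $V$ of $\Gamma_3$ is $\cong \mbb{C}$ (when $H_a^*$ is Cousin, $W = V\cap iV$ has complex dimension $1$ and $V/W$ is $2$-real-dimensional, so $\mbb{C}^2/V \cong \mbb{R}^2 \cong \mbb{C}$), and the induced holomorphic map $q\colon H_a^* \to \mbb{C}/\Lambda$ onto an elliptic curve is \emph{not} available in the Cousin case. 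So the honest construction is: let $L\colon \mbb{C}^2\to\mbb{R}$ be a real-linear functional vanishing on $V$; since $\dim_{\mbb R} V = 3$ in $\mbb{R}^4$, the space of such $L$ is one-dimensional, and $L$ descends to a function $\ell$ on $H_a^* = \mbb{C}^2/\Gamma_3$. Writing $\mbb{C}^2 = \mbb{R}^4$ and decomposing $L = \mathrm{Re}(\alpha) $ for a suitable $\mbb{C}$-linear form when restricted correctly, one finds $\ell$ is a (real part of a) global holomorphic function iff $H_a^*$ is \emph{not} Cousin; in the Cousin case $\ell$ is merely pluriharmonic but still a well-defined smooth function, and moreover $\ell$ is \emph{proper} because $V$ is closed and of real codimension $1$ — no, that again gives properness only onto $\mbb{R}$, which it cannot since $H_a^*$ is compact-fibered over it. The correct statement: $\ell$ is bounded. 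Hence $e^{\ell}$ is a bounded positive smooth plurisubharmonic function, which is useless as an exhaustion. This confirms that \emph{no} such exhaustion built from a single linear datum suffices; one genuinely needs the full pull-back from $H_a$.

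The clean route, and the one I would actually write, is: recall that $H_a$ itself carries the obvious plurisubharmonic function $\Psi([z_1,z_2]) := \abs{z_1}^{2/\log\abs{a_1}}\cdot\text{(correction)}$ — more precisely, following the standard potential on Hopf surfaces, the function $\phi(z) := \norm{z}^2$ on $\mbb{C}^2\setminus\{0\}$ is strictly plurisubharmonic but not $\mbb{Z}$-invariant, yet $\tilde\phi(z) := \abs{z_1}^{2\mu_1} + \abs{z_2}^{2\mu_2}$ with $\mu_j := -\log\abs{a_j}^{-1}\cdot(\text{common value adjustment})$ — concretely pick $\mu_1,\mu_2 > 0$ with $\abs{a_1}^{\mu_1} = \abs{a_2}^{\mu_2} =: \theta < 1$; then $\tilde\phi\circ\varphi = \theta^2\tilde\phi$, so $\log\tilde\phi$ satisfies $\log\tilde\phi\circ\varphi = \log\tilde\phi + 2\log\theta$, and the function $F := \log\tilde\phi - \frac{2\log\theta}{2\pi}\cdot 2\pi(\text{fractional part})$ does not descend smoothly. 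The fix is the \emph{averaged} function: set $g(z) := \sum_{n\in\mbb{Z}} \chi\bigl(\log\tilde\phi(\varphi^n z)\bigr)$ for a suitable convex increasing $\chi$ with the right growth so the sum converges and is $\mbb{Z}$-invariant; such a $g$ is smooth plurisubharmonic on $\mbb{C}^2\setminus\{0\}$, descends to $H_a$, hence restricts to $H_a^*$, and on $H_a^*$ it can be arranged (using that $\tilde\phi$ is strictly psh and the elliptic fibers near $E_1,E_2$ escape to the coordinate axes) to be an \emph{exhaustion} of $H_a^*$ since points of $H_a^*$ approaching $E_1 \cup E_2 = H_a\setminus H_a^*$ have either $\abs{z_1}$ or $\abs{z_2}$ tending to $0$ along the fundamental domain, forcing $g\to+\infty$. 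This is the main obstacle: verifying convergence of the averaging sum simultaneously with the exhaustion property near the two elliptic curves, i.e.\ choosing $\chi$'s asymptotics correctly at $\pm\infty$. The final sentence of the lemma is then immediate from Lemma~\ref{Lem:InnerIntCurve}'s philosophy — more directly, if $D\subset H_a$ is pseudoconvex with continuous plurisubharmonic exhaustion $\rho$, then $\rho|_{D^*} + g|_{D^*}$ (with $g$ as above, restricted to $D^* = D\cap H_a^*$) is a continuous plurisubharmonic exhaustion of $D^*$, since $g$ already exhausts the ends of $H_a^*$ lying in $H_a$ while $\rho$ exhausts the ends meeting $\partial D$; thus $D^*$ is pseudoconvex.
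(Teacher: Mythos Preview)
Your proposal contains a genuine gap, and along the way you discard the very object that would have worked.

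The paper's own argument is a one-line citation: when $H_a^*$ is a Cousin group it invokes \cite[Proposition~2.4]{Hu}, and when $H_a^*\cong\mbb{C}^*\times E$ the exhaustion is obvious. You instead attempt an explicit construction, which is a reasonable aim, but the averaging ansatz you finally settle on cannot be made to converge. Since $\log\tilde\phi\circ\varphi^n=\log\tilde\phi+2n\log\theta$, your sum is $g(z)=\sum_{n\in\mbb{Z}}\chi\bigl(\log\tilde\phi(z)+2n\log\theta\bigr)$; for $\chi\circ\log\tilde\phi$ to be plurisubharmonic you need $\chi$ convex and increasing, yet any non-constant convex increasing function on $\mbb{R}$ is unbounded above, so the bi-infinite series diverges for every $z$. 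The ``main obstacle'' you flag is not a technical nuisance but a genuine obstruction to this approach.

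What makes this unfortunate is that your second attempt was already correct and you talked yourself out of it. The real-linear functional $\ell$ vanishing on $V=\mbb{R}\Gamma_3$ is, up to a scalar, precisely the pluriharmonic function
\[
h\bigl([z_1,z_2]\bigr)=\frac{\log\abs{z_1}}{\log\abs{a_1}}-\frac{\log\abs{z_2}}{\log\abs{a_2}}
\]
already appearing in the paper. Your assertion that $\ell$ is bounded is wrong: $V$ has real codimension one in $\mbb{C}^2$ and $\Gamma_3$ is a rank-three lattice in $V$, so the induced map $H_a^*=\mbb{C}^2/\Gamma_3\to\mbb{C}^2/V\cong\mbb{R}$ is a proper fibration with compact fibre $V/\Gamma_3\cong(S^1)^3$. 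Hence $h$ is proper, and $h^2$ (or $\cosh h$) is a smooth plurisubharmonic exhaustion of $H_a^*$, with Levi form $2\,\partial h\wedge\ol{\partial}h\geq0$; this handles the Cousin and non-Cousin cases simultaneously. Your final paragraph, deducing that $D^*=D\cap H_a^*$ is pseudoconvex by adding the exhaustion of $H_a^*$ to a given one of $D$, is correct.
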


If $H_a^*$ is Cousin, this lemma follows from~\cite[Pro\-position~2.4]{Hu}. If
$H_a^*$ is not Cousin, then $H_a^*\cong \mbb{C}^*\times E$ clearly has a smooth
plurisubharmonic exhaustion.

Let $D\subset H_a$ be a pseudoconvex domain which is not Stein. If the
pseudoconvex domain $D^*\subset H_a^*$ is not Stein, then $D^*$ is saturated
with respect to the foliation $\mathcal{F}|_{H_a^*}$,
see~\cite[Theorem~3.1]{GMO2}. Since the leaves of $\mathcal{F}$ are relatively
compact orbits of a one parameter subgroup of $T$, continuity of the action map
$T\times H_a\to H_a$ implies that $D$ is saturated with respect to
$\mathcal{F}$ in this case. Therefore, let us assume that $D\not=D^*$ is Stein.
We will complete the proof of Theorem~\ref{Thm:Main} by showing that then $D$ is
Stein as well.

We note first that $D$ cannot contain $E_1$ or $E_2$ if $D^*$ is Stein. Indeed,
due to the continuity of the leaves of $\mathcal{F}$ remarked above, if $E_1$
was contained in $D$, then some of the relatively compact leaves of
$\mathcal{F}$ would lie in $D^*$, contradicting the assumption that $D^*$ is
Stein.

Let us consider the subset $C(D)\subset\mbb{P}TD$ defined in~\eqref{Eqn:C(X)}
where $\pi\colon\mbb{P}TD\to D$ is the projectivized tangent bundle. The proof
of the following lemma relies essentially on the explicit knowledge of the
structure of primary Hopf surfaces.

\begin{lem}\label{Lem:innerintegralcurve}
Let $D\subset H_a$ be a pseudoconvex domain. If $D^*=D\cap H_a^*$ is Stein,
then we have $\pi\bigl(C(D)\bigr)\subset D^*\setminus D$.
\end{lem}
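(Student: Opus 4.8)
The plan is to prove the equivalent assertion that $C(D)$ contains no point lying over $D^*$: for every $x_0\in D^*$ and every $[v]\in\mbb{P}T_{x_0}D$ we shall exhibit a continuous plurisubharmonic function $\varphi$ on $D$, smooth near $x_0$, with $\partial\varphi(v)\not=0$. One may assume that $D$ itself is not Stein, for otherwise $C(D)=\emptyset$ and there is nothing to prove; consequently, as observed before the lemma, $D$ contains neither $E_1$ nor $E_2$. The argument then splits according to whether or not $[v]$ is the tangent line of the leaf $F\in\mathcal{F}$ through $x_0$.

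For the directions transverse to $\mathcal{F}$ the idea is to use the explicit structure of $H_a$ recalled in Section~3. Lifting to $\mbb{C}^*\times\mbb{C}^*$, the exponential of the pluriharmonic function $[z_1,z_2]\mapsto\frac{\log\abs{z_1}}{\log\abs{a_1}}-\frac{\log\abs{z_2}}{\log\abs{a_2}}$ is, up to a positive power, the $\mbb{Z}$-invariant monomial $N_2:=\abs{z_1}^{2c_2}\abs{z_2}^2$ with $c_2:=-\log\abs{a_2}/\log\abs{a_1}\in[-1,0)$. Since $N_2=e^{u_2}$ with $u_2=2c_2\log\abs{z_1}+2\log\abs{z_2}$ plurisubharmonic, $N_2$ is plurisubharmonic on $H_a^*$; and because $c_2<0$ the summand $2c_2\log\abs{z_1}$ is pluriharmonic near $E_1=\{z_2=0\}$, so $N_2$ extends by $0$ across $E_1$ to a continuous plurisubharmonic function on $H_a\setminus E_2$ that is smooth off $E_1$. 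As $\partial N_2=N_2\,\partial u_2$ with $\partial u_2=c_2\frac{dz_1}{z_1}+\frac{dz_2}{z_2}$, and since the leaves of $\mathcal{F}$ are the integral curves of the vector field on $H_a^*$ induced by $(\log\abs{a_1},\log\abs{a_2})\in\lie{t}$, one computes that at a point of $H_a^*$ one has $\partial N_2(v)=0$ precisely when $v$ spans the leaf direction. The analogous monomial $N_1:=\abs{z_2}^{2c_1}\abs{z_1}^2$ is plurisubharmonic on $H_a\setminus E_1$ with the same level sets. Hence if $D$ avoids $E_2$ (respectively $E_1$) then the restriction of $N_2$ (respectively $N_1$) to $D$ serves for every transverse direction over $D^*$. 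The case in which $D$ meets both $E_1$ and $E_2$ has to be treated on its own, for then neither monomial is globally defined on $D$; I expect this to be one of the two delicate points, to be dealt with either by ruling out this configuration under the hypothesis that $D^*$ is Stein, or by combining $N_1$ and $N_2$ on $H_a\setminus E_1$ and $H_a\setminus E_2$.

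For the leaf direction the essential point is that $D^*$ is Stein. A Stein manifold contains no compact Levi-flat hypersurface --- restrict a strictly plurisubharmonic exhaustion to the leaves and invoke the maximum principle --- and, in the elliptic case, no compact curve; hence the compact leaf closure $\ol{F}$, taken in $H_a^*$, is not contained in $D^*$, and since $\ol{F}\subset H_a^*$ it is not contained in $D$. Therefore $\ol{F}$ meets $\partial D$, and as the leaf is dense in $\ol{F}$ there are points of the leaf arbitrarily close to $\partial D$. The continuous plurisubharmonic exhaustion of $D$ given by Proposition~\ref{Prop:Exhaustion} --- applicable because $D^*$ being Stein forces neither $E_1$ nor $E_2$ to be a component of $\partial D$, as a leaf closure sufficiently near such a curve would otherwise be a compact Levi-flat hypersurface inside $D^*$ --- is therefore unbounded, hence non-constant, along the leaf. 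After a regularization this produces a point $x_1$ on the leaf, belonging to $D$, together with a smooth plurisubharmonic function on $D$ whose $\partial$ is non-zero in the leaf direction at $x_1$; thus the leaf direction at $x_1$ is not in $C(D)$. Since the leaf through $x_0$ is an integral curve of the holomorphic vector field $\xi_{H_a}$, the first assertion of Lemma~\ref{Lem:InnerIntCurve}, applied to the pseudoconvex manifold $D$, then forces the leaf direction at $x_0$ out of $C(D)$ as well: otherwise the whole lifted integral curve would lie in $C(D)$, contradicting what we just showed at $x_1$.

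Combining the two cases gives $C(D)\cap\pi^{-1}(D^*)=\emptyset$, that is, $\pi\bigl(C(D)\bigr)$ is contained in $D\setminus D^*=D\cap(E_1\cup E_2)$, as asserted. The two places where real work is needed are the configuration in which $D$ meets both elliptic curves, and the regularization step in the leaf case: since the exhaustion of $D$ is only continuous, one has to exhibit a genuinely \emph{smooth} plurisubharmonic function on $D$ that is non-degenerate in the leaf direction at some point of the leaf through $x_0$, for instance by globalizing a suitable local smooth function through the truncation device used in the proof of Lemma~\ref{Lem:InnerIntCurve}.
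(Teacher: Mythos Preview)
Your approach is correct in outline, and the two gaps you flag can indeed be filled exactly as you suggest: once Proposition~\ref{Prop:Exhaustion} provides a continuous plurisubharmonic exhaustion of $D$, the truncation device from the proof of Lemma~\ref{Lem:InnerIntCurve} lets you globalize the local monomial $N_j$ near any $x_0\in D^*$ (so the case where $D$ meets both $E_1$ and $E_2$ causes no trouble), and the same device, combined with convolution along the $T$-action on $H_a^*$, lets you produce at some point of the leaf a globally plurisubharmonic function that is smooth nearby and has non-vanishing $\partial$ in the leaf direction. The route, however, is genuinely different from the paper's. The paper argues by contradiction: assuming $\pi\bigl(C(D)\bigr)$ meets $D^*$, it uses convolution to smooth the exhaustion and then invokes the \emph{second} assertion of Lemma~\ref{Lem:InnerIntCurve} to obtain a one-parameter subgroup $A\subset T$ with $A\cdot x$ relatively compact in $D$; a direct case analysis of all such $A$ (depending on whether $z_1z_2=0$, $z_1/z_2\notin\mbb{R}$, or $z_1/z_2\in\mbb{R}$) then shows that $\ol{A\cdot x}$ must contain $E_1$ or $E_2$, contradicting the earlier observation that $D$ contains neither. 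Your argument is direct and splits by direction: the explicit monomials $N_j$ dispose of every transverse direction in one stroke, and for the leaf direction you use only the \emph{first} assertion of Lemma~\ref{Lem:InnerIntCurve}, transporting non-membership in $C(D)$ along the leaf rather than deducing relative compactness. Your approach avoids the subgroup case analysis entirely and makes the role of the foliation $\mathcal{F}$ more visible; the paper's approach is more uniform in that it treats all directions at once and does not need the explicit invariant functions.
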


\begin{proof}
Suppose that $\pi\bigl(C(D)\bigr)$ meets $D^*$. Since $H_a^*$ is an Abelian
complex Lie group, it has a biinvariant Haar measure. Therefore, we can apply
the usual convolution technique in order to approximate the continuous
plurisubharmonic exhaustion of $D$ uniformly on compact subsets by smooth ones.
This allows us to apply Lemma~\ref{Lem:InnerIntCurve} to prove existence of a
complex one parameter subgroup $A$ of $T$ and a point $x\in D^*$ such that
$A\cdot x$ is relatively compact in $D$. Note that $A\cdot x$ is not relatively
compact in $D^*$ since the latter is assumed to be Stein.

We claim that that the closure of such a curve $A\cdot x$ in $D$ (and hence $D$
itself) would have to contain $E_1$ or $E_2$, which then, as noted above,
will contradict our assumption that $D^*$ is Stein. In order to prove this
claim, consider $A=\bigl\{( e^{tz_1},e^{tz_2});\ t\in\mbb{C}\bigr\}$ where
$z=(z_1,z_2)\in\mbb{C}^2 \setminus\{0\}$. If $z_1=0$ or $z_2=0$, we see directly
that the closure of $A\cdot x$ contains $E_2$ or $E_1$. Hence, suppose that
$z_1,z_2\not=0$. If $\frac{z_1}{z_2}\notin\mbb{R}$, already the closure of $A$
in $\mbb{C}^2\setminus\{0\}$ contains $\{z_1=0\}\cup\{z_2=0\}$, thus the closure
of $A\cdot x$ in $D$ contains $E_1\cup E_2$ as well in this case. Therefore, we
are left to deal with the case $A=\bigl\{(e^t,e^{\lambda t});\
t\in\mbb{C}\bigr\}$ where $\lambda=\frac{z_2}{z_1}\in\mbb{R}$. Consider the
smooth map $\pi_A\colon\mbb{C}^*\times\mbb{C}^*\to\mbb{R}^{>0}$ defined by
$\pi_A(w_1,w_2) :=\frac{\abs{w_1}^\lambda}{\abs{w_2}}$. The closure $\ol{A}$ of
$A$ in $T=\mbb{C}^*\times\mbb{C}^*$ is contained in the kernel of $\pi_A$. Since
$\ol{A}\cdot x$ is closed and non-compact in $D^*$, we conclude that
$\pi_A(a_1,a_2)=\frac{\abs{a_1}^\lambda}{\abs{a_2}}$ is closed and non-compact
in $\mbb{R}^{>0}$; in particular we have $\abs{a_1}^\lambda\not=\abs{a_2}$. Now
choose $t_m\in\mbb{C}$ such that $a_1^me^{t_m}=c$ for all $m\in\mbb{Z}$. It
follows that
\begin{equation*}
\left|a_2^me^{\lambda t_m}\right|=\left(\frac{\abs{a_2}}{\abs{a_1}^\lambda}
\right)^m\abs{c}^\lambda.
\end{equation*}
Since $\frac{\abs{a_2}}{\abs{a_1}^\lambda}\not=1$, we see that
$\bigl(a_2^me^{\lambda t_m}\bigr)$ converges to $0$ for $m\to\infty$ or
$m\to-\infty$. This proves again that the closure of $A\cdot x$ in $D$ contains
$E_1$ or $E_2$.
\end{proof}

Combining the Lemmas~\ref{Lem:innerintegralcurve} and~\ref{Lem:StrPsh}, we see
that there exists a plurisubharmonic function $\varphi$ on $D$ which is
strictly plurisubharmo\-nic on $D^*$.

\begin{rem}
Due to~\cite{Ri} (see the formulation given in~\cite[Chapter I.5.E]{Dem}), we may
assume without loss of generality that $\varphi$ is smooth on $D^*$.
\end{rem}

As we have noted above, $D$ cannot contain $E_1$ or $E_2$, so that $D\cap E_1$
and $D\cap E_2$ are closed Stein submanifolds of $D$. Therefore, the following
lemma implies that $D$ is Stein, which then completes the proof of
Theorem~\ref{Thm:Main}.

\begin{lem}\label{Lem:Stein}
Let $X$ be a connected complex manifold endowed with a pluri\-subharmonic
exhaustion $\varphi$. Suppose that there exists a closed Stein submanifold $A$ of
$X$ with at most finitely many connected components such that $\varphi$ is
strictly plurisubharmonic on $X\setminus A$. Then $X$ is Stein.
\end{lem}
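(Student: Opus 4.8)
The plan is to reduce the conclusion to a standard solution of the Levi problem. Two essentially equivalent targets are available: either construct a continuous strictly plurisubharmonic exhaustion of $X$ and invoke Grauert's theorem together with the Richberg smoothing theorem (cf.\ the preceding remark), or show that $H^q(X,\mathcal{F})=0$ for every coherent analytic sheaf $\mathcal{F}$ and every $q\geq 1$ and invoke the converse of Cartan's Theorem~B (Serre's criterion), which is legitimate since $X$ has countable topology. In either case the hypotheses supply two complementary pieces of information: on $X\setminus A$ the exhaustion $\varphi$ is already strictly plurisubharmonic, and, since $A$ is Stein, Siu's theorem yields an open Stein neighbourhood $U$ of $A$ in $X$, on which a smooth strictly plurisubharmonic exhaustion $\tau$ exists and $H^q(U,\mathcal{F})=0$ for $q\geq1$.

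The real content is to amalgamate these two regimes along the submanifold $A$, which may well be non-compact. I would carry this out by an Andreotti--Grauert type bumping of the relatively compact sublevel sets $X_c:=\{\varphi<c\}$: across the part of a shell $\overline{X_{c'}}\setminus X_c$ lying in $X\setminus A$ one uses the convex bumps permitted by strict plurisubharmonicity of $\varphi$, while a neighbourhood of $A$ inside the shell is absorbed into the Stein set $U$, where bumping is unobstructed. Executing this shows that the restriction maps $H^q(X,\mathcal{F})\to H^q(U,\mathcal{F})$ are isomorphisms for $q\geq1$ --- equivalently, in the constructive formulation, that $\varphi$ and $\tau$ patch into a global continuous strictly plurisubharmonic exhaustion of $X$. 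Combined with the vanishing on $U$ this gives $H^q(X,\mathcal{F})=0$ for all $q\geq1$, hence $X$ is Stein.

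I expect the bumping near $A$ to be the main obstacle, and this is exactly where the non-compactness of $A$ bites: a single compactly supported correction of $\varphi$ cannot do the job, and, more pointedly, a naive regularized-maximum patch of $\varphi$ with a function built from $\tau$ is bound to fail in the directions tangent to $A$ as soon as $\varphi$ restricts to a harmonic function on some holomorphic disc contained in $A$ --- the maximum principle then prevents $\tau$ from ever taking over near that disc. It is here that the hypothesis of finitely many components of $A$ should enter, allowing one to organise the bumping near $A$ into finitely many steps at each level (and, cohomologically, ensuring that the relevant direct limits stabilise).

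Finally, once holomorphic convexity of $X$ has been secured, there is a quick way to finish. Every compact analytic subset $Z\subset X$ must lie in $A$: on $X\setminus A$ the function $\varphi$ is strictly plurisubharmonic and so cannot be constant on any positive-dimensional subvariety, whereas it is constant on each connected compact subvariety; and $A$, being Stein, contains no positive-dimensional compact subvariety, so $Z$ is discrete. Hence the maximal compact analytic subset of $X$ is discrete, the Remmert reduction of $X$ is a biholomorphism, and $X$ is Stein.
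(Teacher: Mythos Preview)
You have assembled the right ingredients --- Siu's Stein neighbourhood of $A$ and the strict plurisubharmonicity of $\varphi$ away from $A$ --- but the argument stops exactly where you yourself flag the obstacle: the bumping across the shell near $A$ is never carried out, only announced, and the objection you raise against a regularised-maximum patch (failure along holomorphic discs in $A$ on which $\varphi|_A$ is harmonic) applies with equal force to the unexplained ``absorb a neighbourhood of $A$ into $U$'' step. As written, the proposal is a strategy with its central lemma missing. The Remmert-reduction paragraph is correct, but it presupposes holomorphic convexity of $X$, which is the whole issue.

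The paper avoids bumping altogether. It first shows that each sublevel set $X_n=\{\varphi<n\}$ is Stein, and then concludes via Narasimhan's theorem (since $X_{n-1}$ is a sublevel of a plurisubharmonic function on the Stein manifold $X_n$, it is Runge in $X_n$) together with the standard fact that a Runge exhaustion by relatively compact Stein opens is Stein. For the first step: $A_{n+1}:=A\cap X_{n+1}$ is again closed and Stein in $X_{n+1}$, so Siu yields a Stein neighbourhood $U_{n+1}\supset A_{n+1}$ with a strictly plurisubharmonic exhaustion $\psi_{n+1}$; choose a cutoff $\chi_{n+1}$ equal to $1$ on a relatively compact neighbourhood $V_n\supset\overline{A_n}$ and vanishing near $\partial U_{n+1}$. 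Because $\overline{X_n}$ is \emph{compact}, the Levi form of $\chi_{n+1}\psi_{n+1}$ is uniformly bounded below on $X_n$. Now let $\rho_n$ be a plurisubharmonic exhaustion of $X_n$ that is smooth and strictly plurisubharmonic on $X_n\setminus A_n$ (e.g.\ a Richberg smoothing of $-\log(n-\varphi)$). For $k$ large, $\chi_{n+1}\psi_{n+1}|_{X_n}+k\,e^{\rho_n}$ is a smooth strictly plurisubharmonic exhaustion of $X_n$: on $V_n$ the first summand already supplies strict plurisubharmonicity, and on $X_n\setminus V_n$ the second summand is strictly plurisubharmonic with Levi form as large as desired, swallowing the bounded negativity of the first. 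It is precisely the compactness of $\overline{X_n}$ that makes this ``large multiple beats bounded error'' device work and replaces the missing bumping step; the non-compactness of $A$ that worried you never enters. Incidentally, the finitely-many-components hypothesis plays no visible role in the paper's argument either.
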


\begin{proof}
For $\alpha\in\mbb{R}$ write $X_\alpha:=\{\varphi<\alpha\}$. In the first step we
will show that $X_n$ is Stein for all $n\geq1$. Since $A$ is Stein, every
$A_n:=A\cap X_n$ is a closed Stein submanifold of $X_n$.

Due to~\cite{Siu}, we find an open Stein neighborhood $U_{n+1}$ of $A_{n+1}$ in
$X_{n+1}$. Consequently, there exists a strictly plurisubharmonic exhaustion
function $\psi_{n+1}$ on $U_{n+1}$. Let us choose a relatively compact open
neighborhood $V_n$ of $A_n$ in $U_{n+1}$ as well as a cutoff function
$\chi_{n+1}$ which is identically $1$ on $V_n$ and which vanishes near $\partial
U_{n+1}$. Then $\chi_{n+1}\psi_{n+1}\colon X\to\mbb{R}$ is strictly
plurisubharmonic in a neighborhood of $A_n$ and its Levi form is uniformly
bounded from below on $X_n$. Moreover, let $\rho_n$ be a plurisubharmonic
exhaustion of $X_n$ which is strictly plurisubharmonic and smooth on
$X_n\setminus A_n$. Then, for $k$ sufficiently large the function
$\chi_{n+1}\psi_{n+1}|_{X_n}+ke^{\rho_n}$ is a smooth strictly plurisubharmonic
exhaustion function of $X_n$, proving that $X_n$ is Stein.

Since we know now that $X_n$ is Stein, we can apply~\cite[Corollary~1]{Nar2}
which implies that $X_{n-1}$ is Runge in $X_n$ for every $n\geq2$. Therefore $X=
\bigcup_{n\geq1}X_n$ is a Runge exhaustion of $X$ by relatively compact Stein
open subsets, hence $X$ is Stein, see~\cite[Theorem~VII.A.10]{GuRo}.
\end{proof}

\end{document}